\documentclass[a4paper,twoside,fleqn,10pt]{article}

\pdfoutput=1

\usepackage{epsfig}
\usepackage{enumerate}
\usepackage{amsmath}
\usepackage{amssymb}
\usepackage{amsthm}

\textwidth      16cm
\textheight     22cm
\parindent       0pt
\parskip         1ex
\evensidemargin  0cm
\oddsidemargin   0cm

\newtheorem{Theorem}{Theorem}[section]
\newtheorem{Lemma}[Theorem]{Lemma}
\newtheorem{Proposition}[Theorem]{Proposition}
\newtheorem{Corollary}[Theorem]{Corollary}
\newtheorem{Definition}[Theorem]{Definition}
\newtheorem{Remark}[Theorem]{Remark}

\newenvironment{theorem}{\begin{Theorem} \begin{sl}}{\end{sl}
                         \end{Theorem}}
\newenvironment{lemma}{\begin{Lemma} \begin{sl}}{\end{sl} \end{Lemma}}
\newenvironment{proposition}{\begin{Proposition}
        \begin{sl}}{\end{sl} \end{Proposition}}

\newenvironment{definition}{\begin{Definition}
       \begin{sl}}{\end{sl} \end{Definition}}
\newenvironment{remark}{\begin{Remark}
       \begin{rm}}{\end{rm} \end{Remark}}

\newcommand{\C}{{\mathbb C}}
\newcommand{\T}{{\mathbb T}}
\newcommand{\cX}{\mathcal{X}}
\newcommand{\degree}{\text{degree}}
\newcommand{\partiel}[2]{\frac{\partial #1}{\partial #2}}

\newcommand{\R}{{\mathbb R}}
\newcommand{\ad}{\text{ad}}
\newcommand{\bvv}[1]{\frac{\partial}{\partial #1}}
\newcommand{\cO}{\mathcal{O}}
\newcommand{\cinf}{C^{\infty}}
\newcommand{\commentaar}[1]{{}}
\newcommand{\eps}{\varepsilon}
\newcommand{\expad}{\text{expad}}
\newcommand{\e}{\text{e}}
\newcommand{\hot}{\text{h.o.t.}}

\newcommand{\system}[1]{\left\{\begin{aligned} #1 \end{aligned}\right.}
\newcommand{\wpq}{\tfrac{p}{q}}

\newcommand{\version}{\today}
\newcommand{\shorttitle}{Stability pockets...seasonality}
\newcommand{\shortauthor}{Hoveijn}

\markboth{\shorttitle..., \shortauthor..., \version}
{\shorttitle..., \shortauthor..., \version}
\pagestyle{myheadings}
\title{Stability pockets of a periodically forced oscillator in a model for seasonality}
\author
{
{\protect\normalsize Igor Hoveijn} \\
{\protect\footnotesize\protect\it 
University of Groningen, Faculty of Mathematics and Natural Sciences, The Netherlands}
}
\date{\version}

\commentaar{
- meer referenties
}


\begin{document}

\maketitle



\begin{abstract}
\noindent
A periodically forced oscillator in a model for seasonality shows stability pockets and chains thereof in the parameter plane. The frequency of the oscillator and the season indicated by a value between zero and one are the two parameters. The present study is intended as a theoretical complement to the numerical study of Schmal et al. \cite{smhb} of stability pockets or Arnol'd onions in their terminology. We construct the Poincar\'e map of the forced oscillator and show that the Arnol'd tongues are taken into stability pockets by a map with a number of folds. Stability pockets are already observed in an article by van der Pol \& Strutt in 1928, see \cite{ps1928} and later explained by Broer \& Levi in 1995, see \cite{bl1995}.
\end{abstract}

\textbf{Keywords:} forced oscillator, resonance, synchronization, Poincar\'e map, stability pocket, circadian clock, Zeitgeber, entrainment

\section{Introduction}\label{sec:intro}
The numerical study by Schmal et al. \cite{smhb} of a model for seasonal effects on the circadian clock shows stability pockets and chains thereof in the parameter plane. This study aims to complement their results by giving a theoretical background for the observed phenomena. We also indicate what is to be expected when their system is perturbed. The setting of the problem is bifurcations of parameter dependent dynamical systems, in particular periodically forced oscillators.

A periodically forced oscillator shows periodic dynamics or synchronization if the frequency of the forcing is close enough to a rational multiple of the frequency of the oscillator in absence of forcing. On the other hand, if this frequency ratio is not close enough to a rational number for many frquency ratio's the dynamics is quasi periodic. Now we consider the periodically forced oscillator as a system of two asymmetrically coupled oscillators, where the first is forced by the second. The latter has fixed dynamics. In particular its frequency is fixed and we set it equal to one by a scaling of time. Thus the frequency of the first oscillator becomes an essential parameter of the system. Another important parameter is the coupling strength. In the parameter plane of coupling strength versus frequency there are regions, called Arnol'd tongues (see \cite{arn1983}) or resonance regions, where the first oscillator synchronizes with the second, that is there are one or more stable (and unstable) periodic solutions. These tongues are wedge shaped and their vertices lie at rational points on the frequency axis. So for a fixed value of the coupling strength there is a frequency interval where synchronization occurs. On the boundary of this interval the periodic solutions disappear in saddle-node bifurcations. If the coupling strength goes to zero the interval shrinks to a (rational) point. If the coupling strength increases there may be many other bifurcations, for example period doubling bifurcations, see \cite{bst1998}.

In certain examples of periodically forced oscillators the tongues close again in a second vertex, forming a so called called \emph{stability pocket} (called \emph{Arnol'd onion} by Schmal et al. \cite{smhb}), see left part of figure \ref{fig:mtpocket}. This phenomenon occurs for example in Hill's equation and it has already been observed by van der Pol \& Strutt \cite{ps1928}, although they do not mention it explicitly as such. Much later Hill's equation has been reanalyzed by Broer \& Levi \cite{bl1995} using methods not available to van der Pol \& Strutt. Then the term \emph{instability pocket} was introduced. However, it depends on the point of view whether a pocket is called a stability or an instability pocket, see remark \ref{rem:pockets}. Although the context of the Hill equation differs from ours, the mechanism by which the pockets are formed is the same. In both systems a general underlying system has a wedge shaped resonance region. The map that takes the parameter space of the specific example to the parameter space of the general system has one or more folds, thus generating pockets.

The model in Schmal et al. \cite{smhb} consists of a differential equation for a specific two dimensional oscillator, in fact the normal form of the Hopf bifurcation, with periodic forcing. Here we take a slightly different approach. Our oscillator will be a general phase oscillator asymmetrically coupled to a second phase oscillator with constant frequency, the forcing oscillator or Zeitgeber. First we study a general coupling and approximate the Poincar\'e map. Then we use these results to study seasonality like in \cite{smhb} by considering a periodic block function as forcing, where the length of the block (modeling the length of the daylight interval and thus season) is controlled by an additional parameter. 

We could have started with a general oscillator in two dimensions with an external periodic forcing. But then the technicalities of bringing the system in manageable form and keeping it so when the external forcing is applied would obscure the phenomenon we want to study. Our primary goal is the way the dynamics in the phase direction of the oscillator changes when the external force is applied: we are less interested in the change of the shape and position of the periodic orbit. Therefore we start with a phase oscillator from the very beginning.

\commentaar{opmerking over de term 'forcing', waar die vandaan komt, hier iets heel anders is en toch zo heet}

\section{Two asymmetrically coupled phase oscillators}\label{sec:tpo}
Let $\psi$ and $\varphi$ be phase angles so $\psi, \varphi \in S^1$ where $\psi$ is interpreted as the phase angle of the oscillator and $\varphi$ as the phase angle of the external forcing. Furthermore, let $f_{\mu}$ be a smooth function on $\T^2$. The following asymmetrically coupled system describes a periodically forced phase oscillator.
\begin{equation}\label{eq:tpo}
\system{\dot{\psi} &= \omega + \mu f_{\mu}(\psi, \varphi) \\ \dot{\varphi} &= 1}
\end{equation}
The system depends on small parameters $\mu, \delta,\cdots \in \R^m$. The frequency of the oscillator is $\omega$ for $\mu = 0$. In that case the dynamics of the system is quasi-periodic if $\omega$ is irrational, but if $\omega$ is rational, the dynamics is periodic. Thus for $\mu = 0$ periodic dynamics is exceptional and quasi periodic dynamics is typical. This changes dramatically if $\mu \neq 0$, then periodic dynamics becomes typical. Thus it seems natural to take a closer look near rational values of $\omega$, therefore we set $\omega = \wpq + \delta$, with $p$ and $q$ relative prime integers and $\delta$ is a small parameter. It is most efficient to study this system via a Poincar\'e map. A good candidate is the map that scores $\psi$ at consecutive crossings of $\varphi = 0$, sometimes called the stroboscopic map. Since all Poincar\'e maps are equivalent, results will not depend on this choice. For convenience we switch to the lift of this system to $\R^2$, then the differential equation becomes
\begin{equation}\label{eq:tpolift}
\system{\dot{x} &= \wpq + \delta + \mu f_{\mu}(x, y) \\ \dot{y} &= 1}
\end{equation}
Now we define the (lift of the) Poincar\'e map as follows.
\begin{definition}\label{def:tpopmap}
Let $\Phi_t$ be the flow of equation \eqref{eq:tpolift}, then the Poincar\'e map $F$ is implicitly defined by $(F(x), 1) = \Phi_1(x, 0)$.
\end{definition}
The map $F$ in this definition is the lift of a circle map $f$ (not to be confused with $f_{\mu}$), the Poincar\'e map on the circle. In actual computations it is easier to use the lift $F$. Now since $F$ is the lift of a circle map it must be of the form $F(x) = x + \omega + \mu h_{\mu}(x)$ where $h$ is a 1-periodic function. Unfortunately there is no easy connection between $h$ and $f$ in equation \eqref{eq:tpolift}. But by a so called normal form transformation we obtain a vectorfield approximation of the Poincar\'e map $F$, see section \ref{sec:proofs}. The approximating vectorfield is in the righthand side of the following equation
\begin{equation}\label{eq:vvapprox}
\dot{u} = \delta + \sum \tilde{f}_{k,l,m}\,\mu^m\,\e^{2\pi iku}
\end{equation}
where $\tilde{f}_{k,l,m}$ depends in a complicated way on the Taylor-Fourier coefficients of $f_{\mu}$ and the sum runs over all $k$ and $l$ with $pk+ql=0$ and $m \in \{1,\cdots,n\}$. To be more precise, the time-1 flow of this vector field approximates the Poincar\'e map. This is formulated more precisely in section \ref{sec:vvapprox}.

The main property of the Poincar\'e map $f$ we use is that a stationary or a periodic point of $f$ corresponds to a periodic solution of equation \eqref{eq:tpo} and vice verse. Thus the existence of periodic solutions of equation \eqref{eq:tpo} can be read off from the existence of stationary or periodic points of $f$. The vectorfield approximation does not cover the full dynamics of the original system. But hyperbolic stationary points of \eqref{eq:vvapprox} correspond to stationary points of the Poincar\'e map $F$. Moreover by persistence of saddle-node bifurcations we recover the well-known picture of resonance tongues for $F$ in the $(\omega, \mu)$-plane from equation \eqref{eq:vvapprox}. For each pair $(p,q)$ a tongue emanates from the $\omega$-axis at $\omega = \frac{p}{q}$. The tongue at $(p,q) = (1,1)$ is called the \emph{main tongue}, the others are just labeled by the pair $(p,q)$. Indeed we may rewrite the differential equation as follows
\begin{equation}\label{eq:vvapprox2}
\dot{u} = \delta + \mu \tilde{f}_{\mu}(u)
\end{equation}
where $\tilde{f}_{\mu}$ is a 1-periodic function. Then at least near $(0,0)$ stationary points exist in a region in the $(\delta, \mu)$-plane bounded by curves of saddle-node bifurcations of the form $\mu = \gamma_1 \delta$ and $\mu = \gamma_2 \delta$, for some constants $\gamma_1$ and $\gamma_2$. In the simplest case where $\tilde{f}_{\mu}(u) = \sin(2\pi u)$ we have $\mu = \pm \delta$.

In the next section we take a specific form for equation \eqref{eq:tpolift} containing parameters $\sigma$ and $\lambda$ and we study the map $(\sigma, \lambda) \mapsto (\delta, \mu)$. The inverse image of this map takes the tongues of the general equation for two asymmetrically coupled oscillators to the resonance regions of model with season dependent forcing.

\section{Main results}\label{sec:mainres}

\subsection{Stability pockets in the seasonal oscillator}\label{sec:season}
In a simple model of season dependent forcing we take a phase oscillator with a particular 1-periodic forcing. We will call it the \emph{seasonal oscillator} for short. The general form will be the following.
\begin{equation}\label{eq:season}
\system{\dot{\psi} &= \omega + \eta f(\psi) + \eps g_{\lambda}(\varphi) \\ \dot{\varphi} &= 1}
\end{equation}
Here $f$ and $g_{\lambda}$ are $\cinf$ functions on $S^1$ so that the corresponding vector field is also $\cinf$. The function $f$ describes the non-linearity of the oscillator and $g_{\lambda}$ determines the external periodic forcing. As before we take $\omega = \tfrac{p}{q} + \sigma$, with positive integers $p$ and $q$. The parameters $\sigma$, $\eta$ and $\eps$ are small, but not necessarily of the same order.

To model seasonality we use a function as in \cite{smhb}, namely one that depends on a non-small parameter $\lambda$, which determines the fraction of the period the forcing is 'on'. Thus we interpret $\lambda = 0$ as 'winter' and $\lambda = 1$ as 'summer'. A simple example being a block function with a block of length $\lambda \in [0,1]$
\begin{equation}\label{eq:block}
g_{\lambda}(t) = 
\begin{cases}
1, & 0 \leq t < \lambda\\
0, & \lambda \leq t < 1
\end{cases}
\end{equation}
with $g_{\lambda}(t+1) = g_{\lambda}(t)$ for all $t$. This function is not $\cinf$. Therefore we replace $g_{\lambda}$ by the convolution $\phi * g_{\lambda}$ where $\phi$ is for example the function $\phi(x) = \sqrt{\frac{\alpha}{\pi}} \exp(-\alpha x^2)$. In section \ref{sec:fcf} we will show that the result does not depend on the choice of $\phi$. For simplicity we assume from now on that $g_{\lambda}$ is $\cinf$.

We use the methods of the previous section to find a vectorfield approximation of the Poincar\'e map for the seasonal oscillator. The following theorem gives a more precise statement. For a proof see section \ref{sec:apmapso} where also the meaning of \emph{degree} will be clarified.
\begin{theorem}\label{the:season}
The vectorfield approximation up to degree two of the Poincar\'e map of equation \eqref{eq:season} is 
\begin{equation}\label{eq:apmapseasonal}
\dot{u} = \sigma - \frac{q}{p} \Big(\eta^2 \sum_k f_k f_{-k} + \eps \eta \sum_m f_{mq} g_{-mp}(\lambda) \e^{2\pi imqu} \Big)
\end{equation}
where $u = x - \frac{p}{q}y$. The Fourier coefficients of $f$ and $g_{\lambda}$ are $f_k$ and $g_k(\lambda)$ respectively, where
\begin{equation*}
g_k(\lambda) = \exp(-\frac{\pi^2 k^2}{\alpha}) \frac{i}{2\pi k} \big(\exp(-2\pi ik\lambda) - 1\big).
\end{equation*}
\end{theorem}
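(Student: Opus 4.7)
I apply the vectorfield approximation of section \ref{sec:vvapprox} to the seasonal perturbation $\eta f(\psi) + \varepsilon g_\lambda(\varphi)$, and carry out the Fourier bookkeeping up to degree two in $(\eta, \varepsilon)$. First I pass to the co-rotating coordinate $u = x - \tfrac{p}{q}y$, which turns the lift of \eqref{eq:season} into $\dot u = \sigma + \eta f(u + \tfrac{p}{q}y) + \varepsilon g_\lambda(y)$ with $\dot y = 1$. In the $(x, y)$-Fourier lattice the two summands populate the axes $l = 0$ and $k = 0$ respectively, so the resonance selector $pk + ql = 0$ of equation \eqref{eq:vvapprox} is met at degree one only by $(0, 0)$; the corresponding constants $\eta f_0$ and $\varepsilon g_0(\lambda)$ are absorbed into $\sigma$.

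At degree two I insert the first-order form of $u(t)$ back into the variation-of-constants integral for the Poincar\'e map $F(u_0) = u(1)$. This produces three double integrals of the form $\eta^2 \iint f'\,f$, $\eta\varepsilon \iint f'\,g$, and an $\varepsilon^2$ piece that is $u_0$-independent and hence absorbed. Expanding $f$ and $g_\lambda$ in Fourier series, the inner primitive $\int_0^t e^{2\pi i l(p/q)s}\,ds$ yields the small divisor $\tfrac{q}{2\pi i lp}$, while the outer integration over $[0, 1]$ kills every index pair except the resonant ones. For the $f$-$f$ term the surviving selection is $k + l = 0$, producing the constant $-\tfrac{q}{p}\sum_k f_k f_{-k}$; for the $f$-$g$ term the surviving selection is $(k, l) = (mq, -mp)$, producing $-\tfrac{q}{p}\sum_m f_{mq}\, g_{-mp}(\lambda)\, e^{2\pi i mq u_0}$. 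Identifying $F$ with the time-one flow of a vectorfield gives equation \eqref{eq:apmapseasonal}.

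The closed form of $g_k(\lambda)$ then follows from the convolution theorem $\widehat{\phi*g_\lambda}(k) = \hat\phi(k)\,\hat g_\lambda(k)$: the Gaussian $\phi(x) = \sqrt{\alpha/\pi}\,e^{-\alpha x^2}$ has Fourier transform $\hat\phi(k) = \exp(-\pi^2 k^2/\alpha)$, while the raw block function integrates to $\hat g_\lambda(k) = \tfrac{i}{2\pi k}(e^{-2\pi i k\lambda} - 1)$, giving the stated formula.

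The main obstacle is justifying that the non-resonant Fourier contributions to the two double integrals are precisely those eliminated by the normal form change of coordinates in section \ref{sec:vvapprox}, so that only the resonant pairs survive and the prefactor $-q/p$ emerges correctly from the small divisor. Keeping the degenerate sub-cases ($k = 0$, $l = 0$, $k + l = 0$ with $k \neq 0$) straight and confirming that the absorbed degree-one constants do not feed back into the degree-two resonant structure is the key bookkeeping step.
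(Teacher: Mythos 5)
Your route is genuinely different from the paper's. The paper never integrates the equations of motion: it conjugates the vectorfield by $\expad Y$, with $Y=a\,\partial/\partial x$ solving the homological equation $X_1+\ad Y(X_0)=0$, and reads off the degree-two resonant part from $\tfrac12\ad Y(X_1)=\tfrac12(af'-fa')\,\partial/\partial x$. You instead iterate the variation-of-constants formula for the return map in the co-moving coordinate $u$. Your resonant bookkeeping is correct: the selections $k+l=0$ and $(k,l)=(mq,-mp)$, the small divisor $\tfrac{q}{2\pi i lp}$, and the resulting prefactor $-q/p$ reproduce \eqref{eq:apmapseasonal}, and the convolution argument for $g_k(\lambda)$ matches the paper's treatment of the smoothed block function. (A minor point: in the co-moving frame $g_\lambda$ does not depend on $u$, so at second order there is no $\eps^2$ double integral at all, not merely a $u_0$-independent one.)

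The one step that fails as written is the claim that ``the outer integration over $[0,1]$ kills every index pair except the resonant ones.'' For the time-one map this is false whenever $q>1$: a non-resonant pair contributes $\int_0^1 \e^{2\pi i(kp/q+l)s}\,ds$, which vanishes only when $kp/q+l$ is a nonzero integer, i.e.\ only when $q$ divides $k$; all other non-resonant pairs leave bounded, $u_0$-dependent remainders in $F$. You flag this yourself as the ``main obstacle,'' but it must actually be resolved, and there are two clean ways. Either compute the time-$q$ return map (integrate over $[0,q]$, so that $\int_0^q \e^{2\pi i(kp/q+l)s}\,ds=0$ exactly unless $pk+ql=0$ --- consistent with the paper's own interpretation of $Z$ via $\tilde{F}^q-p=\psi_q$), in which case your computation closes and the constants come out right; or accept that the time-one map retains non-resonant oscillatory terms and remove them by the near-identity conjugation of proposition \ref{pro:vvapprox}, which is precisely the machinery your route was trying to bypass. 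Separately, the degree-one resonant constant $\eta f_0+\eps g_0(\lambda)=\eta f_0+\eps\lambda$ that you absorb into $\sigma$ is $\lambda$-dependent, so it should be carried explicitly rather than silently renamed: it re-enters the map $(\sigma,\lambda)\mapsto(\delta,\mu)$ used in the next theorem.
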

We wish to find a region in the parameter space of the seasonal oscillator, that is in the $(\sigma, \lambda)$-plane, where stationary points of equation \eqref{eq:apmapseasonal} exist. Since this equation is a special form of equation \eqref{eq:vvapprox} we consider the map $(\sigma, \lambda) \mapsto (\delta, \mu)$. The result is formulated in the following theorem.
\begin{theorem}\label{the:pockets}
Let $p$ and $q$ be relative prime as mentioned before. Then the map $(\sigma, \lambda) \mapsto (\delta, \mu)$ is implicitly given by:
\begin{equation*}
(\delta, \mu) = (\sigma - \eta^2 c_1 - \eps \eta \lambda c_2, \eps \eta |h(\lambda)|).
\end{equation*}
where $c_1$ and $c_2$ are constants and $h$ is a $\frac{p}{2}$-periodic function with zero average. This implies that the $(p,q)$-tongue has $p$ \emph{stability pockets}.
\end{theorem}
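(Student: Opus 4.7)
The plan is to read the map $(\sigma,\lambda)\mapsto(\delta,\mu)$ directly off of the vectorfield approximation in equation \eqref{eq:apmapseasonal} by placing its right-hand side into the canonical form $\dot u=\delta+\mu\tilde f_\mu(u)$ of equation \eqref{eq:vvapprox2}. The right-hand side of \eqref{eq:apmapseasonal} splits into a part that is constant in $u$ (the $m=0$ term together with the $\eta^2$ sum and $\sigma$) and a part that is a non-trivial Fourier series in $u$ (the $m\neq 0$ terms).

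For the identification of $\delta$ I would collect the constant-in-$u$ contributions. The term $-\frac{q}{p}\eta^2\sum_k f_k f_{-k}$ gives $-\eta^2 c_1$ with $c_1=\frac{q}{p}\sum_k f_k f_{-k}$. The $m=0$ summand of the double sum is $f_0\,g_0(\lambda)$, and taking the limit $k\to 0$ in the explicit formula for $g_k(\lambda)$ yields $g_0(\lambda)=\lambda$, producing a contribution $-\eps\eta\lambda c_2$ with $c_2=\frac{q}{p}f_0$. Adding $\sigma$ gives exactly $\delta=\sigma-\eta^2 c_1-\eps\eta\lambda c_2$.

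For the identification of $\mu$ I would analyse the oscillatory-in-$u$ part $\sum_{m\neq 0} f_{mq}\,g_{-mp}(\lambda)\,\e^{2\pi imqu}$. Using the explicit Fourier coefficients stated in Theorem \ref{the:season}, a brief calculation gives
\begin{equation*}
g_{-mp}(\lambda)=\exp\!\left(-\frac{\pi^2 m^2 p^2}{\alpha}\right)\frac{\e^{\pi i m p\lambda}\sin(\pi m p\lambda)}{\pi m p},
\end{equation*}
so every coefficient carries the factor $\sin(\pi m p\lambda)$ and is exponentially small in $m^2 p^2/\alpha$ for $m\neq 0$. The Gaussian decay singles out $m=\pm 1$ as the dominant mode; pairing the two complex-conjugate terms (and using reality of $f$) produces a single sinusoid in $u$ of amplitude proportional to $\eps\eta\,|f_q|\,|\sin(\pi p\lambda)|$. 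Declaring $\mu$ to be this amplitude defines $h(\lambda)$ as a constant multiple of $\sin(\pi p\lambda)$, which has zero average and the periodicity asserted in the theorem.

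Counting pockets is then immediate: in $\lambda\in[0,1]$ the zeros of $|h|$ occur at $\lambda=k/p$ for $k=0,1,\ldots,p$, and between consecutive zeros one has $\mu>0$. Pulling back the wedge-shaped $(p,q)$-tongue from the $(\delta,\mu)$-plane, each of the $p$ intervals of positive $\mu$ is a fold of the map and produces one pocket, the tongue closing at the two vertices where $\mu=0$. The main obstacle I anticipate is justifying that the higher harmonics $|m|\geq 2$ do not shift these closure points or create spurious ones; here the crucial observation is that the common factor $\sin(\pi m p\lambda)$ appearing in \emph{every} $g_{-mp}(\lambda)$ vanishes whenever $\sin(\pi p\lambda)$ does, so the full oscillatory sum vanishes simultaneously at $\lambda=k/p$. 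Persistence of the saddle-node curves of the leading-order system under the exponentially small corrections from $|m|\geq 2$ then preserves exactly $p$ pockets inside the $(p,q)$-tongue.
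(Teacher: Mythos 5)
Your proposal follows essentially the same route as the paper: identify $\delta$ from the $u$-independent part of \eqref{eq:apmapseasonal} (with $c_1=\tfrac{q}{p}\sum_k f_kf_{-k}$, $c_2=\tfrac{q}{p}f_0$ via $g_0=\lambda$) and $\mu$ from the amplitude of the oscillatory part, whose $\lambda$-dependence is $|\sin(mp\pi\lambda)|$, then count the $p$ intervals between the zeros $\lambda=k/p$. Your explicit observation that every harmonic carries the factor $\sin(mp\pi\lambda)$, so that the full oscillatory sum vanishes simultaneously at the common zeros $\lambda=k/p$ and the higher modes cannot shift or add closure points, is in fact a more careful justification of the pocket count than the paper's appeal to the dominance of the leading Fourier coefficient.
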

The map implicitly defined in the theorem takes the main tongue of a general periodically forced oscillator into a so called \emph{stability pocket} in the $(\sigma, \lambda)$-plane of the seasonal oscillator. Other tongues, depending on $p$, are mapped to a chain of stability pockets. In figure \ref{fig:pockets} a graphical representation of this result is shown.
\begin{figure}[htbp]
\setlength{\unitlength}{1mm}
\begin{picture}(60,65)(30,0)
\put(0,   0){\includegraphics[scale=0.8]{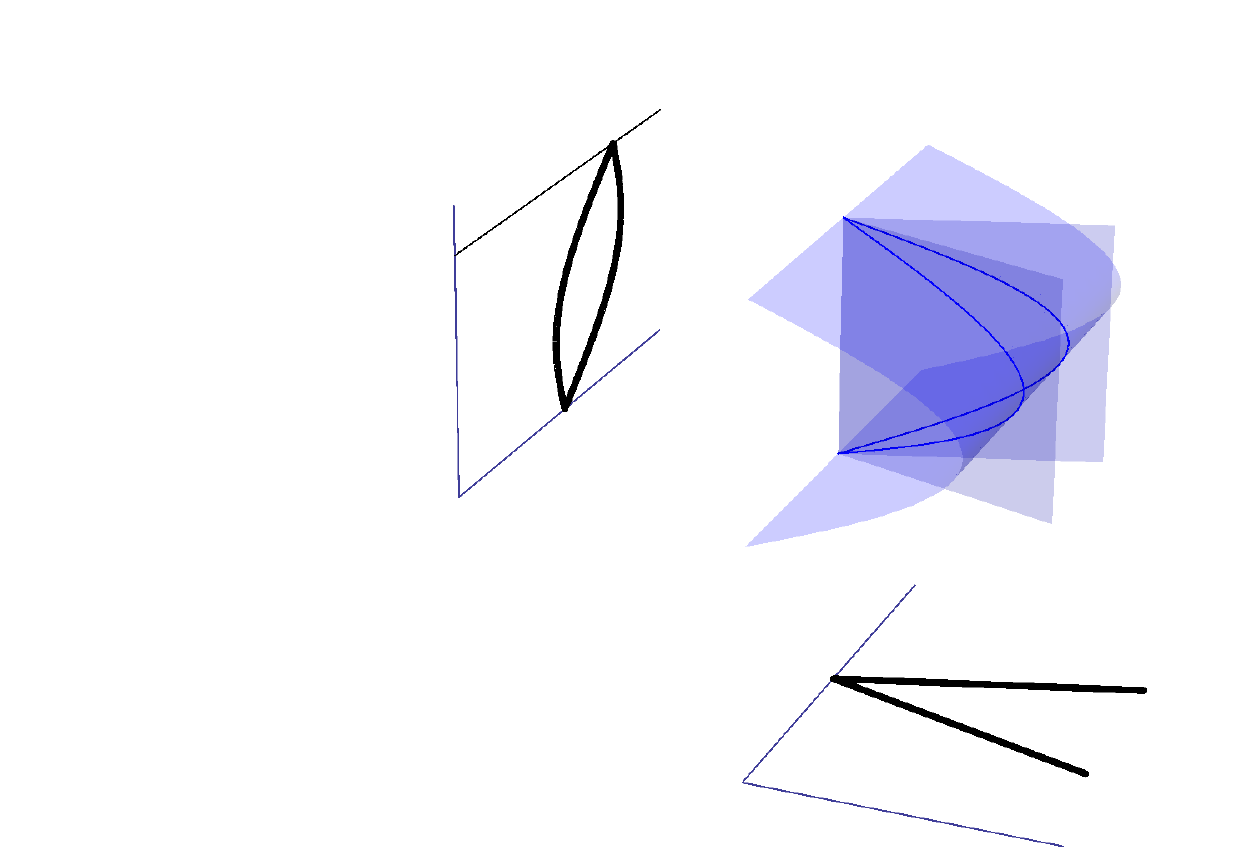}}
\put(63,  1){\includegraphics{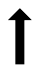}}
\put(35, 13){\includegraphics{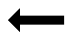}}
\put(83, -3){$\mu$}
\put(70, 20){$\delta$}
\put(53, 39){$\sigma$}
\put(33, 49){$\lambda$}
\end{picture}
\begin{picture}(0,0)(0,0)
\put(0,   0){\includegraphics[scale=0.8]{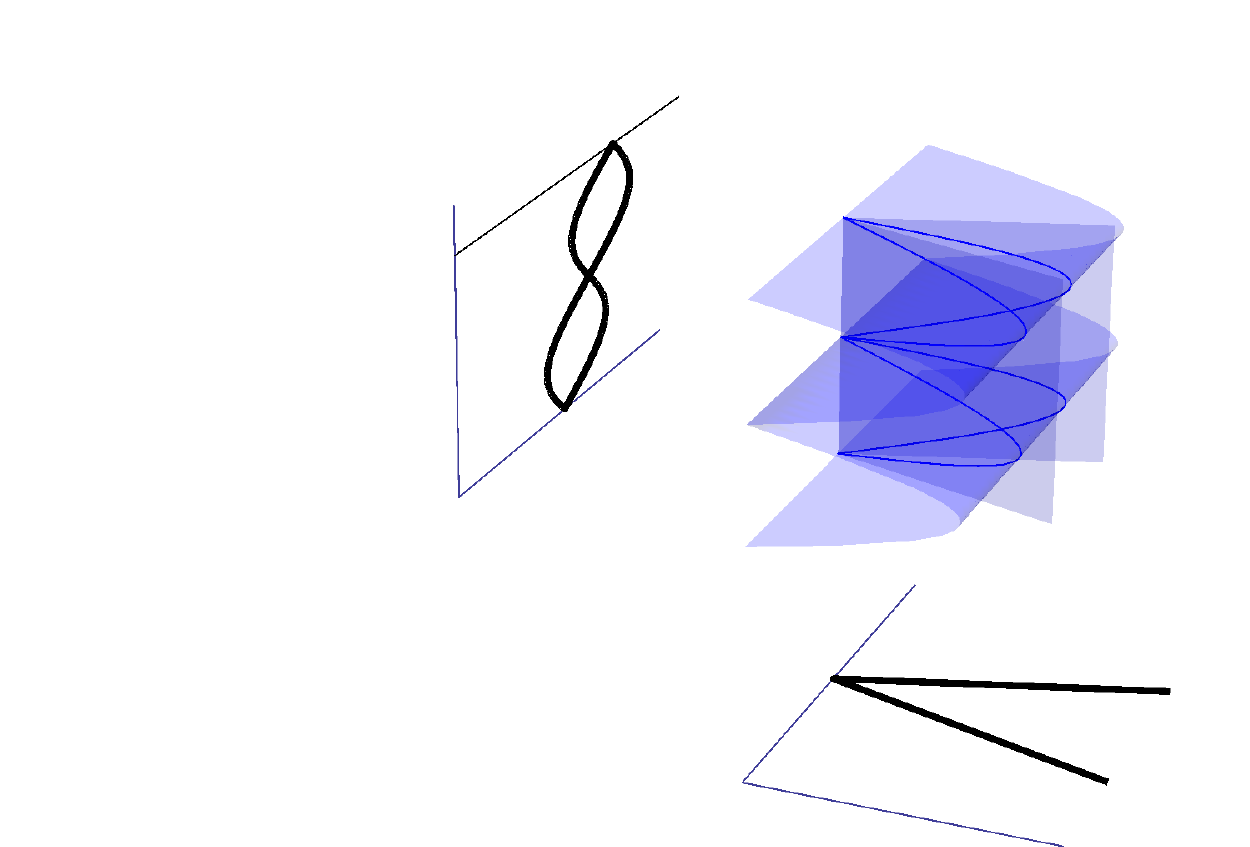}}
\put(63,  1){\includegraphics{vpijl.pdf}}
\put(35, 13){\includegraphics{hpijl.pdf}}
\put(83, -3){$\mu$}
\put(70, 20){$\delta$}
\put(53, 39){$\sigma$}
\put(33, 49){$\lambda$}
\end{picture}
\caption{\textit{Stability pockets and folds of the map $(\sigma, \lambda) \mapsto (\delta, \mu)$. Left: the pocket for the $(1,1)$-tongue; right: the chain of pockets for the $(2,1)$-tongue.}\label{fig:pockets}}
\end{figure}
\begin{remark}\label{rem:pockets}\strut
\begin{enumerate}[i)]\itemsep 0pt
\item Theorems \ref{the:season} and \ref{the:pockets} together support the numerical results in \cite{smhb}, there is an excellent qualitative agreement with their findings and the propositions. There is no reason to doubt that with some more effort this agreement can also be made quantitative.
\item In \cite{smhb} the stability pocket is called ``Arnol'd onion''. Here the term \emph{stability pocket} is used to connect with the existing literature on Hill's equation, see \cite{bl1995}. In Hill's equation the zero solution becomes unstable in the resonance tongues. In case such a tongue closes, a ``pocket'' is formed which is then called an instability pocket. In our case we are interested in synchronization, that is in the existence of stable periodic solutions. Since these exist in resonance tongues, a tongue closing and forming a ``pocket'' is now called a stability pocket.
\commentaar{Leg verband of geef verschil aan met Broer \& Levi, van der Pol \$ Strutt, Simo zal ook wel iets hebben.}
\item In the special case $f(\psi) = \sin(2\pi\psi)$ the map becomes simpler, namely $(\delta, \mu) = (\sigma - \eta^2 c_1 - \eps \eta \lambda c_2, \eps \eta \sqrt{2} |\sin(p\pi\lambda)|)$. In fact the map $(\delta, \mu) = (\sigma - \lambda, |\sin(p\pi\lambda)|)$ has essentially the same properties. This map is used to draw the graphs in figure \ref{fig:pockets}.
\item We have to find the map $(\sigma, \lambda) \mapsto (\delta, \mu)$ for each tongue. The reason is that no single parameterization of all normal forms exists: for each resonance $p:q$ we have to compute a new normal form.\commentaar{formulering}
\item In a more accurate model of seasonality, the parameter $\lambda$ will vary in time. Since this variation is slow with respect to the frequency of the forcing we may consider a model based on a slow-fast system. From this perspective we are studying the fast limit of such a system.
\end{enumerate}
\end{remark}
In a biological interpretation of these results, the image of the main tongue is perhaps the most relevant, see figure \ref{fig:mtpocket}. For a fixed value of $\lambda$ at a certain season, the \emph{range of entrainment} is indicated by a horizontal line intersecting the stability pocket. Only in this interval of $\omega$ values, the seasonal oscillator \emph{entraines} to the Zeitgeber. That is for these $\omega$ values, the oscillator has periodic solutions with the same period (or frequency) as the Zeitgeber. From the perspective of the oscillator the value of $\omega$ is fixed. Then the range of entrainment is in the seasonal direction. As can be seen in the figure, the oscillator cannot entrain to the Zeitgeber for all seasons. In case of the main tongue, not in the middle of summer and not in the middle of winter. When we look at the $(2,1)$-tongue, the range of entrainment in the seasonal direction may consist of two intervals, see figure \ref{fig:mtpocket}. For more biological interpretation and examples refer to \cite{smhb}. 
\begin{figure}[htbp]
\setlength{\unitlength}{1mm}
\begin{picture}(60,50)(20,5)
\put(0,   0){\includegraphics[scale=1.0]{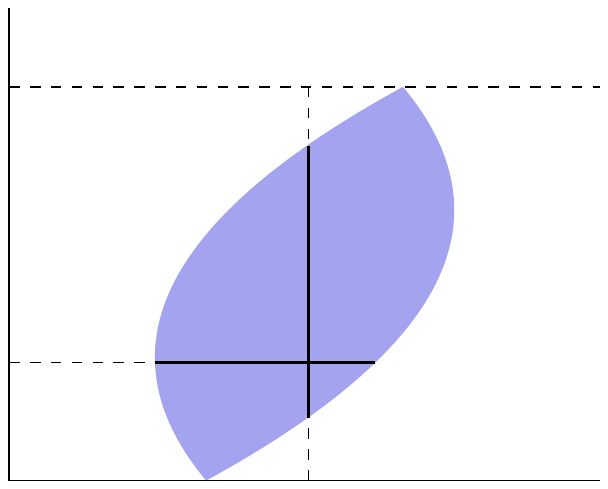}}
\put(26, 21){$\lambda$}
\put(59,  6){$\omega$}
\put(26,  8){$0$}
\put(26, 49){$1$}
\put(49,  6){$1$}
\end{picture}
\begin{picture}(0,0)(0,5)
\put(0,   0){\includegraphics[scale=1.0]{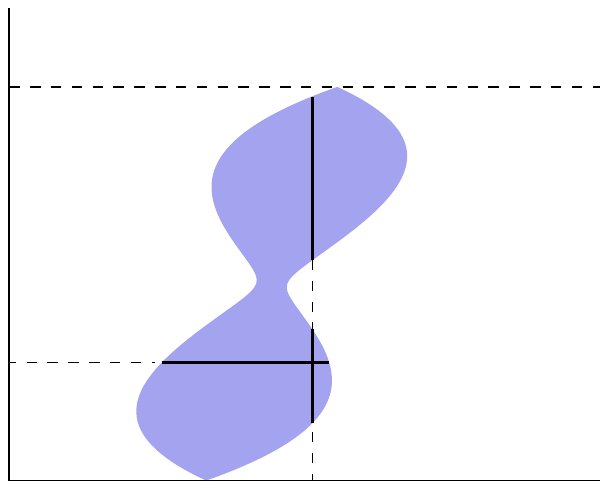}}
\put(26, 21){$\lambda$}
\put(59,  6){$\omega$}
\put(26,  8){$0$}
\put(26, 49){$1$}
\put(49,  6){$2$}
\end{picture}
\caption{\textit{Stability pockets and ranges of entrainment in both frequency and seasonal directions. The image of the main tongue is shown in the left figure. The right figure shows the image of the $(2,1)$-tongue for a small perturbation of $g_{\lambda}$.}\label{fig:mtpocket}}
\end{figure}
\begin{proof}[Proof of theorem \ref{the:pockets}]
Compare equation \eqref{eq:vvapprox2} to equation \eqref{eq:apmapseasonal} to determine the map $(\delta, \mu) \mapsto (\sigma, \lambda)$. We immediately see that $\delta = \sigma - \eta^2 c_1 - \eps \eta \lambda c_2$ with $c_1 = \frac{q}{p} \sum_k f_k f_{-k}$ and $c_2 = \frac{q}{p} f_0$. Recall that $g_0 = \lambda$. The second component is determined by $\sum_m f_{mq}g_{-mp} \exp(2\pi imqu)$. The $\lambda$ dependent factor of the modulus of the Fourier coefficients is $|\sin(mp\pi\lambda)|$. The latter is a periodic function of $\lambda$ with $p+1$ zeroes for all $m$. Or, put differently, with at least $p$ folds for all $m$. Thus the number of stability pockets is determined by the first and largest Fourier coefficients, $f_1$ and $f_{-1}$, and thus by the map $(\delta, \mu) = (\sigma - \eta^2 c_1 - \eps \eta \lambda c_2, \eps \eta c_3 |\sin(p\pi\lambda)|)$. Therefore the $(p,q)$-tongues have a chain of $p$ stability pockets. To describe their shape in detail one would need the remaining Fourier coefficients.\commentaar{hm!}
\end{proof}
%

\subsection{Generalizations}\label{sec:gens}
Let us now generalize the forcing $g_{\lambda}$ to a function $h_{\lambda}$ such that still $h_0 = 0$ and $h_1 = 1$, so that $\lambda = 0$ corresponds to winter and $\lambda = 1$ to summer. But for $\lambda \in (0,1)$, $h_{\lambda}$ is not necessarily a block function. For simplicity we assume that $h_{\lambda}$ is $\cinf$. Just like the Fourier coefficients of $g_{\lambda}$, those of $h_{\lambda}$ can be regarded as parameterizations of a closed curve in $\C$ with endpoints at zero, the parameter being $\lambda$. Now for $k \neq 0$ the $\lambda$ dependent factor of $g_k(\lambda)$ is $\big(\exp(-2\pi ik\lambda) - 1\big)$, which passes through zero for $\lambda = \frac{n}{k}$ for $n \in \{0,1,\ldots,k\}$. When we perturb $g_{\lambda}$ to $h_{\lambda}$ subject to the condition that $h_0 = 0$ and $h_1 = 1$, the Fourier coefficients again parameterize closed curves in $\C$ with endpoints at zero, but in general not passing through zero for $\lambda \in (0,1)$. Thus we will still have stability pockets for the more general system, but the chains of stability pockets will open up to a single pocket, see the image of the $(2,1)$-tongue in figure \ref{fig:mtpocket}.
%

\section{Proofs}\label{sec:proofs}
This section contains the proofs of the theorems in the previous sections. We first give a short review of approximating a Poincar\'e map and then apply this to our coupled oscillators. In a last section we discuss how to deal with a nonsmooth forcing.

\subsection{A vectorfield approximation of the Poincar\'e map}\label{sec:vvapprox}
In this section we consider vectorfields on the phase space $\R^2$ as lifts of vectorfields on the two torus. Our aim is to construct a vectorfield approximation of the Poincar\'e map. Where the latter is the lift of the Poincar\'e map on the two torus. We will work in the $\cinf$ context unless explicitly stated otherwise.

First we define a parameter dependent differential equation on $\R^2$. Let $f_{\mu}$ be a function $\R^2 \to \R$, 1-periodic in both arguments and assume $f$ has a Taylor-Fourier expansion
\begin{equation*}
f_{\mu}(x, y) = \sum_{k,l,m} f_{k,l,m} \, \mu^m \e^{2\pi i(kx+ly)}
\end{equation*}
with coefficients $f_{k,l,m}$. Now consider the differential equation
\begin{equation}\label{eq:tpo2}
\system{\dot{x} &= \wpq + \delta + \mu f_{\mu}(x, y) \\ \dot{y} &= 1}
\end{equation}
depending on parameters $\delta$ and $\mu$. The first will be interpreted as a detuning and the second as the strength of the nonlinearity. The Poincar\'e map $F$ of this system is defined as $(F(x), 1) = \Phi_1(x,0)$, where $\Phi_t$ is the flow over time $t$ of equation \eqref{eq:tpo2}.\commentaar{P-afb is niet uniek, maar 'alle' P-afbn zijn equivalent via de stroming van de diffvgl}

\textbf{Computing the vectorfield approximation.} By a sequence of near identity transformations we put system \eqref{eq:tpo2} in a form such that the $y$ dependence in the first component becomes trivial, at least up to a certain order. Here we use perturbation theory so we have to be precise about the size of various components. The grading we use will come from the parameters only. The reason is that the vectorfield is periodic in $x$ and $y$ therefore we set $\degree(x) = \degree(y) = 0$. Furthermore we set $\degree(\mu) = \degree(\delta) = 1$. With these definitions the function $f$ has a formal Fourier-Taylor expansion in $x$, $y$ and $\mu$
\begin{equation*}
f_{\mu}(x, y) = \sum_{k,l,m} f_{k,l,m} \, \mu^m \e^{2\pi i(kx+ly)}
\end{equation*}
with Fourier-Taylor coefficients $f_{k,l,m}$. Finally we define the vectorfield
\begin{align*}
X   &= X_0 + X_1 + \cdots + X_{n} + \cdots\\
X_0 &= \wpq \bvv{x} + \bvv{y}\\
X_j &= \sum_{k,l,|m|=j} f_{k,l,m} \, \mu^m \e^{2\pi i(kx+ly)} \bvv{x}
\end{align*}
Now we apply a standard normalization procedure to obtain a normal form of the vectorfield $X$. The result is formulated in the next proposition.
\begin{proposition}\label{pro:vvapprox}
By a sequence of $n$ near-identity transformations the vectorfield $X$ is transformed into $\tilde{X}$ with
\begin{align*}
\tilde{X}   &= X_0 + \tilde{X}_1 + \cdots + \tilde{X}_n + \cO(|\mu|^{n+1})\\
\tilde{X}_j &= \sum_{|m|=j,pk+ql=0} \tilde{f}_{k,l,m} \, \mu^m \e^{2\pi i(kx+ly)} \bvv{x}
\end{align*}
The new coefficients $\tilde{f}_{k,l,m}$ depend on the original coefficients in a complicated way. One exception being $\tilde{X}_1$ for which $\tilde{f}_{k,l,m} = f_{k,l,m}$. The transformed vectorfield up to order $n$ only contains \emph{resonant} terms, that is in general $f_{k,l,m} \neq 0$ only if the index $(k,l,m)$ satisfies the \emph{resonance condition} $pk + ql = 0$.
\end{proposition}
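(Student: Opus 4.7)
The plan is to proceed inductively on the order $j = 1,2,\ldots,n$, constructing at each stage a near-identity transformation that kills the non-resonant part of $X_j$ while leaving the already-normalized lower-order terms untouched modulo a remainder of order $|\mu|^{j+1}$ or higher. The natural tool is the Lie series $\exp(\ad_{Y_j})$, with $Y_j$ a vectorfield of degree exactly $j$ chosen to solve a homological equation.

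First I would establish the homological operator. Writing a candidate generator as $Y_j = g_j(x,y,\mu)\bvv{x}$ with $g_j = \sum_{|m|=j} g_{k,l,m}\,\mu^m\,\e^{2\pi i(kx+ly)}$, a direct computation gives
\begin{equation*}
[X_0, Y_j] = \big(X_0 \cdot g_j\big)\bvv{x} = \frac{2\pi i}{q}\sum_{|m|=j}(pk+ql)\,g_{k,l,m}\,\mu^m\,\e^{2\pi i(kx+ly)}\bvv{x}.
\end{equation*}
Hence $\ad_{X_0}$ acts diagonally on the Fourier-Taylor basis with eigenvalue $\tfrac{2\pi i}{q}(pk+ql)$, so its kernel is precisely the space of \emph{resonant} terms ($pk+ql=0$) and it is invertible on the complement. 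This invertibility is the key structural point: the non-resonant part $X_j^{nr}$ can always be written as $[X_0, Y_j]$ by setting $g_{k,l,m} = \frac{q\, f_{k,l,m}}{2\pi i(pk+ql)}$ on non-resonant indices (and zero on resonant ones).

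Next I would carry out the induction. For $j=1$, the vectorfield is already of the required form at order zero, and $\tilde{X}_1 = X_1^{r}$, so setting $\tilde{f}_{k,l,m} = f_{k,l,m}$ for resonant indices $(k,l,m)$ of degree $1$ is immediate; there is no feedback from lower-order transformations because none has been performed. For the inductive step, suppose after $j-1$ steps we have $X = X_0 + \tilde{X}_1 + \cdots + \tilde{X}_{j-1} + X_j^{(j-1)} + R$ with $R$ of degree $\ge j+1$ and each $\tilde{X}_i$ purely resonant. Choose $Y_j$ via the homological equation $[X_0, Y_j] = (X_j^{(j-1)})^{nr}$. Pushing forward via $\exp(\ad_{Y_j})$, expand the Lie series: the terms $\ad_{Y_j}^\ell$ applied to $X_i$ produce objects of degree at least $\ell j + i$, so only $\ad_{Y_j} X_0$ contributes at degree $j$, and everything else is absorbed into a new remainder of degree $\ge j+1$. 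This is precisely why $\tilde{X}_1,\ldots,\tilde{X}_{j-1}$ remain unchanged at their own orders, and why $X_j$ is replaced by its resonant part $\tilde{X}_j$.

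The main obstacle, and the bookkeeping part of the argument, is controlling how the cascade of Lie brackets at each step feeds into the coefficients at still higher orders. After the $j$-th transformation, the term that eventually becomes $\tilde{X}_{j'}$ for $j' > j$ picks up contributions from $\ad_{Y_j} X_{j'-j}$, $\tfrac{1}{2}\ad_{Y_j}^2 X_{j'-2j}$, and so on, and these in turn get reshuffled by subsequent generators $Y_{j+1},\ldots,Y_{j'}$. This is exactly why the proposition only claims a ``complicated dependence'' of the $\tilde{f}_{k,l,m}$ on the original $f_{k,l,m}$. A clean way to present the bookkeeping is to grade everything by degree in $\mu$, observe that $\ad_{Y_j}$ strictly raises degree by $j$ on $X_0$ and by $\ge j$ on $X_i$ with $i \ge 1$, and conclude by a finite geometric-sum estimate that after $n$ iterations the non-normalized remainder lives in $\cO(|\mu|^{n+1})$. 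Finally one checks that each $Y_j$ is $1$-periodic in $x$ and $y$ (inherited from the Fourier form above), so every transformation descends to the two-torus and the resulting $\tilde{X}$ is still the lift of a smooth vectorfield on $\T^2$, which justifies identifying $\tilde{X}$ as the vectorfield approximation underlying \eqref{eq:vvapprox}.
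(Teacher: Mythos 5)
Your plan is correct and follows essentially the same route as the paper's own (sketched) proof: an induction on the degree in $\mu$, generators $Y_j = g_j\,\bvv{x}$ acting by $\expad Y_j$, and the homological equation solved coefficientwise via $g_{k,l,m} = \frac{q f_{k,l,m}}{2\pi i(pk+ql)}$ on non-resonant indices, with the grading argument guaranteeing that lower-order normalized terms are undisturbed and the remainder is $\cO(|\mu|^{n+1})$. The paper adds only the remark (via Takens) that such near-identity transformations may be taken to be flows of $\cinf$ vectorfields on the torus, which you also address at the end.
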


\begin{proof}
This will only be a sketch of the proof, for more details see \cite{hwb2009a}. The key idea of the proof is that every $\cinf$ near identity coordinate transformation can be approximated as closely as needed by the flow of a $\cinf$ vectorfield \cite{ft1974}. Since we work in the context of $\cinf$ vectorfields on the two torus (or lifts thereof) the transformation has to respect this property. The flow of a $\cinf$ vectorfield on the two torus is such a transformation. Since these vectorfields form a Lie algebra we can use rather standard normal form theory. Our aim is to get rid of the time ($y$) dependence without transforming time, therefore we apply an asymmetric transformation generated by the vectorfield $Y=g(x,y)\bvv{x}$ so that the new $x$ depends on $y$ but not the other way around. The procedure is inductive by degree. Let $\cX$ be the set of $\cinf$ vectorfields on the two torus depending on one or more small parameters $\mu$. We define a grading by the degree of $\mu$: $\cX_k \subset \cX$ is the set of vectorfields of degree $k$ or higher in $\mu$. In the normal form procedure we frequently use the commutator $[\cdot,\cdot]$ of vectorfields, then for $X_m \in \cX_m$ we have $[X_m, X_n] \in \cX_{m+n}$. Furthermore if $Y_m \in \cX_m$ is a fixed vectorfield and $\ad Y_m: X \mapsto [Y_m, X]$ then $(\ad Ym)^k (X_n) \in \cX_{km+n}$. Because of these relations we may normalize for increasing degree.

Now suppose we have normalized up to degree $n-1$, then take $Y$ of degree $n$. The transformation acts on the vectorfield $X$ as $\expad Y$, namely
\begin{equation*}
\expad Y (X) = \e^{\ad Y} (X) = X_0 + X_1 + \cdots + X_{n-1} + X_n + \ad Y(X_0) + \hot
\end{equation*}
where $\ad Y(x) = [Y,X]$. As usual in normal form theory we try to solve $X_n + \ad Y(X_0) = 0$ for $g$. Let $g_{k,l}$ be the Fourier coefficients of $g$ then on the level of coefficients the equation becomes $qf_{k,l} - 2\pi i (pk + ql) g_{k,l} = 0$. Thus we set $g_{k,l} = \frac{qf_{k,l}}{2\pi i (pk + ql)}$ provided that $pk + ql \neq 0$. This implies that in the normal form the so called \emph{resonant terms} with coefficient $f_{k,l}$ satisfying the resonance condition $pk + ql = 0$ are retained.
\end{proof}

Assuming we have normalized the vectorfield up to sufficiently high order, we only keep terms up to order $n$ by truncation. As a last step we again change coordinates: $u = x - \tfrac{p}{q}y$ and $v = y$. Then the new vectorfield $Z$ becomes
\begin{equation*}
Z = \Big(\delta + \sum_{j=1}^n \sum_{|m|=j,pk+ql=0} \tilde{f}_{k,l,m} \, \mu^m \e^{2\pi iku}\Big) \bvv{u} + \bvv{v}
\end{equation*}
\commentaar{waar komt $\delta$ ineens vandaan?}
Indeed, if $pk + ql = 0$ then changing to coordinates $u$ and $v$ we get $kx + ly = k(u + \tfrac{p}{q}y) + ly = ku + \tfrac{pk + ql}{q} y = ku$.

\textbf{Interpretation of vectorfield $Z$.} Now let $\tilde{X}$ be the truncated normal form of $X$ and $Z$ be the vectorfield defined above. The corresponding flows are denoted by respectively $\tilde{\Phi}$, $\Phi$ and $\Psi$. Let $F$ be the Poincar\'e map of $\Phi$ defined as $(F(x),1) = \Phi_1(x,0)$. In a similar way we define $\tilde{F}$, then this $\tilde{F}$ is an approximation up to order $n$ of $F$. Furthermore let $\psi$ be the flow of the first component of $Z$. Then we have $\Psi_s(u, 0) = (\psi_s(u), s)$ but also $\Psi_s(u, 0) = (\Pi_x\tilde{\Phi}_s(u, 0) - s\tfrac{p}{q}, s)$, where $\Pi_x$ is the projection $\Pi_x(x,y) = x$. This means that $\tilde{F}^q(u) = \Pi_x\tilde{\Phi}_q(u, 0) = \psi_q(u)+p$. Thus $\tilde{F^q} - p$ is equal to the time $q$ flow of the first component of $Z$. Therefore we call $Z$ a vectorfield approximation of $F$.

\textbf{Using the vectorfield approximation.} The vectorfield $Z$ approximates the Poincar\'e map of the vectorfield $X$ in \eqref{eq:tpo2} but there is no equivalence. Therefore we have to be careful when drawing conclusions about the dynamics of \eqref{eq:tpo2} from analysis of the vectorfield $Z$. Here we will be mainly interested in stationary points of $Z$. A first observation is that hyperbolic stationary points of\commentaar{eerste component} $Z$ correspond to relative equilibria of $\tilde{X}$ and thus to periodic orbits of $X$ (provided that the difference between $X$ and $\tilde{X}$ is small enough). Stationary points of $Z$ satisfy equation
\begin{equation}\label{eq:statpt}
0 = \delta + \sum_{j=1}^n \sum_{|m|=j,pk+ql=0} \tilde{f}_{k,l,m} \, \mu^m \e^{2\pi iku}
\end{equation}
Given $p$ and $q$, the right hand side is a $\frac{2\pi}{q}$ periodic function. Therefore solutions, if they exist, come in $q$ pairs. Now suppose solutions exist, then upon varying parameters $\mu$ they may disappear in tangencies. Assuming for simplicity a single parameter $\mu$, we find a wedge shaped region in parameter space defined by $\gamma_1 \mu < \delta < \gamma_2 \mu$ where solutions exist. This inequality only holds near $(\delta,\mu) = (0,0)$ and the constants $\gamma_1$ and $\gamma_2$ depend on the Taylor-Fourier coefficients $\tilde{f}_{k,l,m}$. Dynamically speaking the boundaries of the wedge are curves of saddle-node bifurcations. These form the familiar stability tongues or Arnol'd tongues. Since saddle-node bifurcations in one parameter families persist under small perturbations, the stability tongues of vectorfield $Z$ approximate those of vectorfield $X$.

For each combination of $p$ and $q$ (positive and relative prime) we find a tongue. The one for $(p, q) = (1, 1)$ is called the main tongue. If we assume for simplicity that the Taylor-Fourier coefficients rapidly decrease for increasing $k$ and $l$ then the leading terms in equation \eqref{eq:statpt} are
\begin{equation}\label{eq:statptapprox}
0 = \delta + \gamma \mu \sin(2\pi u + \chi)
\end{equation}
where $\gamma$ and $\chi$ are determined by $\tilde{f}_{1,-1,1}$ and $\tilde{f}_{-1,1,1}$. This little calculation at least shows that the tip of the tongue is a straight cone.

Similarly the tongues for $(p, q) = (1, q)$ are determined by an equation whose leading terms are as in equation \eqref{eq:statptapprox}, but now $\gamma$ and $\chi$ are determined by $\tilde{f}_{q,-1,1}$ and $\tilde{f}_{-q,1,1}$. Again this shows that the tip of the tongue is a straight cone, but the angle of the cone decreases with increasing $q$.

\commentaar{wanneer naar hogere graad? als f een enkele sin is?}
\begin{remark}\label{rem:vvapprox}
We collect some remarks about the vectorfield approximation.\commentaar{nog wat cryptisch in deze vorm...}
\begin{enumerate}[i)]\itemsep 0pt
\item The transformation $u = x - \tfrac{p}{q}y$ and $v = y$ is not just useful but originates from the following. By construction the truncated vectorfield $\tilde{X} = X_0 + \tilde{X}_1 + \cdots + \tilde{X}_n$ commutes with $X_0$ implying that their flows $\tilde{\Phi}$ and $\Phi^0$ also commute. In other words the flow of $X_0$ generates a symmetry group of $\tilde{X}$. By switching to 'co-moving' coordinates, thus by the transformation $\Phi^0_t$, the vectorfield $\tilde{X}$ transforms to $\tilde{X} - X_0$. Therefore stationary points of $\tilde{X} - X_0$ correspond to periodic orbits of $\tilde{X}$. Because of this property such stationary points are called \emph{relative equilibria}.\commentaar{hoort die niet onder 'interpretatie'?}
\item In the normal form of the seasonal oscillator we will only need terms up to degree two. Then we have to compute a few higher order ($\hot$) terms. Let $X$ and $Y$ be as in the proof, but now $Y$ is of degree $1$ then the terms we need are
\begin{equation*}\label{eq:hot}
\expad Y (X) = X_0 + X_1 + \ad Y(X_0) + \ad Y(X_1) + \frac{1}{2} (\ad Y)^2 (X_0).
\end{equation*}
\end{enumerate}
\end{remark}

\subsection{Approximation of Poincar\'e map of the seasonal oscillator}\label{sec:apmapso}
We apply proposition \ref{pro:vvapprox} to the differential equation of the seasonal oscillator. The starting point is the general form of the lift of equation \eqref{eq:season}
\begin{equation*}
\system{\dot{x} &= \omega + \eta f(x) + \eps g_{\lambda}(y) \\ \dot{y} &= 1}
\end{equation*}
We normalize this vectorfield to obtain an approximate Poincar\'e map. We assume that $f$ and $g$ both have a Fourier series with coefficients $f_k$ and $g_k$. Then the vectorfield is
\commentaar{wat doen we met $f_0$ en $g_0$?}
\begin{align*}
X   &= X_0 + X_1 + X_2\\
X_0 &= \frac{p}{q} \bvv{x} + \bvv{y}\\
X_1 &= (\eta f(x) + \eps g(y)) \bvv{x} = \sum_{k,l} f_{k,l} \e^{2\pi i(kx+ly)} \bvv{x}\\
X_2 &= \delta \bvv{x}
\end{align*}
where $f_{k,0} = \eta f_k$ and $f_{0,l} = \eps g_l$. For both $k \neq 0$ and $l \neq 0$ we set $f_{k,l} = 0$. In view of the resonance condition $pk+ql=0$ for a term $\exp(2\pi i(kx+ly)) \bvv{x}$, see proposition \ref{pro:vvapprox}, there will be no resonant terms of degree 1, therefore we set $\degree(\delta) = 2$ and $\degree(\eps) = \degree(\eta) = 1$. Note that $\omega = \frac{p}{q} + \delta$.

The proof of theorem \ref{the:season} consists of determining the vectorfield of degree $1$ and computing terms up to degree $2$.

\begin{proof}
Following the proof of proposition \ref{pro:vvapprox} we take a vectorfield $Y$ of degree $1$ and we try to solve $X_1 + \ad Y(X_0) = 0$ for the Fourier coefficients of $Y$. The degree is determined by the parameters $\delta$, $\eps$ and $\eta$. Let $Y = a(x,y) \bvv{x}$ and $a$ has Fourier coefficients $a_{k,l}$. Then on the level of Fourier coefficients we get $a_{k,0} = \eta \frac{q}{p} \frac{1}{2\pi ik} f_k$ if $k \neq 0$ and $a_{0,l} = \eps \frac{1}{2\pi il} g_l$ if $l \neq 0$. Since we have solved equation $X_1 + \ad Y(X_0) = 0$ we have $\ad Y(X_1) + \frac{1}{2} (\ad Y)^2 (X_0) = -\frac{1}{2} \ad Y(X_1)$. Thus we obtain up to degree two
\begin{align*}
\tilde{X} &= \expad Y(X) = X_0 + X_1 + \ad Y(X_0) + \ad Y(X_1) + \frac{1}{2} (\ad Y)^2 (X_0) + \hot\\
          &= X_0 + \frac{1}{2} \ad Y(X_1) + \hot
\end{align*}
As a shorthand write $X_1 = f \bvv{x}$, $Y = a \bvv{x}$ and $\ad Y(X_1) = h \bvv{x}$ then $h = a \partiel{f}{x} - f \partiel{a}{x}$. From this expression we select the resonant terms, that is the $h_{k,l}$ satisfying $pk + ql = 0$, then we are left with
\begin{align*}
h(x,y) &= -2 \eta^2 \frac{q}{p} \sum_k f_k f_{-k} - 2 \eps \eta \frac{q}{p} \sum_{pk+ql=0} f_k g_l \e^{2\pi i(kx+ly)}\\
       &= \sigma - \frac{q}{p} \Big(\eta^2 \sum_k f_k f_{-k} + \eps \eta \sum_m f_{mq} g_{-mp}(\lambda) \e^{2\pi im(qx-py)} \Big)
\end{align*}
Setting $u = x - \frac{p}{q}y$ and $v = y$ the vectorfield approximation of the Poincar\'e map becomes
\begin{equation*}
\system{\dot{u} &= \delta - \eta^2 \frac{q}{p} \sum_k f_k f_{-k} - \eps \eta \sum_m f_{mq} g_{-mp}(\lambda) \e^{2\pi imqu}\\ \dot{v} &= 1}
\end{equation*}
\end{proof}

With this result we find the stability tongues of the vectorfield approximation. If $(p,q) = (1,1)$ then the tongue boundaries follow from solving equation
\begin{equation*}
0 = \delta - \eta^2 \sum_k f_k f_{-k} - \eps \eta \sum_l f_{-qk} g_k \e^{2\pi iqku}
\end{equation*}
for $u$, see section \ref{sec:vvapprox}. 


\subsection{The Fourier coefficients of the forcing}\label{sec:fcf}
The forcing $g_{\lambda}$ of the seasonal oscillator, see equation \eqref{eq:block}, is a piecewise constant function and therefore not $\cinf$ as required in proposition \ref{pro:vvapprox} on the approximation of the Poincar\'e map. In order to get a smooth approximation of $g_{\lambda}$ we use convolution with a so called Schwartz function. For this smooth approximation we find the Fourier coefficients. We are in particular interested in the dependence on the parameter $\lambda$. The proofs of the following proposition and lemmas are found by straightforward arguments and computations.

\begin{proposition}\label{pro:fcf}
Let $\phi$ be a normalized Schwartz function and let $\hat{\phi}$ be its Fourier transform. Then the Fourier coefficients of the convolution $\phi * g_{\lambda}$ are $\hat{\phi}(k) \cdot g_k$ where $g_0 = 0$ and $g_k = \frac{i}{2\pi k} \big(\e^{-2\pi ik\lambda} - 1\big)$ if $k \neq 0$. In particular we have $|g_k| = \frac{1}{2\pi k} |\sin(\pi k \lambda)|$.
\end{proposition}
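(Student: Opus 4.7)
The plan has three ingredients: a direct calculation of the Fourier coefficients of the block function, an application of the convolution theorem in the periodic setting, and a standard trigonometric identity for the modulus.

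For the first ingredient, since $g_\lambda$ is the indicator of $[0,\lambda)$ extended $1$-periodically, the coefficient $g_k$ is simply $\int_0^\lambda \e^{-2\pi ikt}\,dt$, which evaluates at once to $\tfrac{i}{2\pi k}(\e^{-2\pi ik\lambda}-1)$ for $k\neq 0$. I would note that this direct calculation actually yields $g_0 = \lambda$ rather than $0$, so the statement's $g_0=0$ has to be read as the mean-subtracted coefficient (or equivalently, the coefficient of the $k=0$ mode of $\phi\ast g_\lambda$ is discarded because it only contributes to a shift of $\delta$, as already visible in Theorem \ref{the:season}).

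For the convolution step, I would expand
\[
c_k \;=\; \int_0^1 (\phi * g_\lambda)(t)\,\e^{-2\pi ikt}\,dt \;=\; \int_0^1\!\!\int_{\R} \phi(t-s)\,g_\lambda(s)\,\e^{-2\pi ikt}\,ds\,dt,
\]
swap the order of integration (Fubini applies since $\phi$ is Schwartz and $g_\lambda$ is bounded), and decompose $\R = \bigsqcup_{m\in\mathbb{Z}}[m,m+1)$. Using the periodicity $g_\lambda(s+m)=g_\lambda(s)$, substituting $u = t-s-m$, and exploiting $\e^{-2\pi ikm}=1$, the translated partial integrals reassemble into $\int_\R \phi(u)\e^{-2\pi iku}\,du=\hat\phi(k)$, giving $c_k = g_k\,\hat\phi(k)$. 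Finally, the identity $|\e^{i\theta}-1|^2 = 4\sin^2(\theta/2)$ applied with $\theta=-2\pi k\lambda$ yields $|\e^{-2\pi ik\lambda}-1| = 2|\sin(\pi k\lambda)|$, and substitution produces the claimed $\lambda$-dependence of $|g_k|$.

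The only mildly delicate point is the interchange of summation over integer translates with the fundamental-period integration; this is harmless thanks to the Schwartz decay of $\phi$, which dominates uniformly in $s$. Otherwise the argument is essentially a one-line computation once the setup is in place. The real purpose of the proposition is bookkeeping: it produces the explicit $\lambda$-factor $\e^{-2\pi ik\lambda}-1$ whose zeros at $\lambda = n/k$ are precisely the folds exploited in the proof of Theorem \ref{the:pockets}.
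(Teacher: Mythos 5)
Your proposal is correct and is essentially the paper's own (barely written) argument: the paper simply declares the proofs ``straightforward'' and splits the same content into a convolution lemma (Fourier coefficients of $\phi * g$ are $\hat{\phi}(k)\cdot g_k$) and Lemma \ref{lem:fcg} (direct computation of the block coefficients), which is exactly your two ingredients plus the identity $|\e^{i\theta}-1|=2|\sin(\theta/2)|$. You are right to flag $g_0$: the direct integral gives $g_0=\lambda$, as the paper's own Lemma \ref{lem:fcg} and the proof of Theorem \ref{the:pockets} confirm, so the proposition's $g_0=0$ is a slip; for completeness you could also have noted that the same trigonometric identity yields $|g_k|=\frac{1}{\pi |k|}|\sin(\pi k\lambda)|$, so the stated constant $\frac{1}{2\pi k}$ is off by a factor of $2$ --- again consistent with the paper's own final display in section \ref{sec:fcf}, and immaterial for the location of the zeros in $\lambda$ that drive the pockets.
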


Let $\phi$ be a normalized Schwartz function, that is
\begin{enumerate}[i)]\itemsep 0pt
\item for all positive integers $m$ and $n$ we have $\sup_{\R} |x^m \phi^{(n)}(x)| < \infty$,
\item $\int_{\R} \phi(x)\,dx = 1$.
\end{enumerate}
The familiar Gauss function $\phi(x) = \frac{1}{\sqrt{\pi}} \e^{-x^2}$ is an example of such a function. For all $\alpha > 0$ the function $\phi_{\alpha}(x) = \alpha \phi(\alpha x)$ is again a normalized Schwartz function and the larger $\alpha$, the closer $\phi_{\alpha} * g_{\lambda}$ is to $g_{\lambda}$.\commentaar{in welke zin?}

\begin{lemma}
Let $g$ be a possibly non-smooth 1-periodic function with a Fourier series such that $g(x) = \sum_k g_k \exp(2\pi kx)$ and let $\phi$ be a normalized Schwartz function with Fourier transform $\hat{\phi}$. Then $\phi * g$ is a smooth 1-periodic function which has a Fourier series with coefficients $\hat{\phi}(k) \cdot g_k$.
\end{lemma}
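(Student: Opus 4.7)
The plan is to verify the three assertions of the lemma in turn: smoothness of $\phi * g$, its 1-periodicity, and the claimed formula for its Fourier coefficients.

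For smoothness I would use that every derivative $\phi^{(n)}$ of a Schwartz function is again Schwartz, hence integrable on $\R$ with rapid decay. Since $g$ is at worst essentially bounded on a period, the integrand $\phi^{(n)}(x-y)\, g(y)$ is dominated uniformly in $x$ on compact sets by an $L^1$ majorant, so differentiation under the integral sign yields $\phi * g \in C^{\infty}(\R)$. For 1-periodicity I would substitute $z = y - 1$ in the convolution integral and invoke $g(z+1) = g(z)$ to conclude $(\phi * g)(x+1) = (\phi * g)(x)$.

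The core step is the Fourier coefficient computation. I would start from
\begin{equation*}
c_k = \int_0^1 (\phi * g)(x)\, \e^{-2\pi ikx}\, dx = \int_0^1 \!\! \int_{\R} \phi(x-y)\, g(y)\, \e^{-2\pi ikx}\, dy\, dx,
\end{equation*}
apply Fubini (justified by $\phi \in L^1(\R)$ together with local boundedness of $g$), and substitute $u = x - y$ in the inner integral to reach
\begin{equation*}
c_k = \int_{\R} g(y)\, \e^{-2\pi iky} \int_{-y}^{1-y} \phi(u)\, \e^{-2\pi iku}\, du\, dy.
\end{equation*}
The key move is to split the outer $\R$-integral into unit cells $[n, n+1)$, $n \in \mathbb{Z}$, and shift each cell back to $[0,1)$ using 1-periodicity of both $g$ and $\e^{-2\pi iky}$, producing
\begin{equation*}
c_k = \int_0^1 g(y)\, \e^{-2\pi iky} \sum_{n \in \mathbb{Z}} \int_{-y-n}^{1-y-n} \phi(u)\, \e^{-2\pi iku}\, du\, dy.
\end{equation*}
The inner sum reassembles to $\int_{\R} \phi(u)\, \e^{-2\pi iku}\, du = \hat\phi(k)$, which factors out of the $y$-integral and leaves $c_k = \hat\phi(k)\, g_k$ as claimed.

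The main technical hurdle is the joint justification of the two interchanges, namely the initial Fubini step and the cell reassembly. Both rest on absolute integrability of $|\phi|$ on $\R$ and on uniform-in-$y$ convergence of the translated partial sums, which the Schwartz decay of $\phi$ provides in abundance. Notably, the argument does not require pointwise convergence of the Fourier series of $g$ itself; only the definition of its Fourier coefficients $g_k$ is used.
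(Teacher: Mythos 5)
Your argument is correct and is precisely the ``straightforward argument and computation'' that the paper declares but omits: Fubini, the substitution $u=x-y$, and the reassembly of the translated unit cells into $\int_{\R}\phi(u)\e^{-2\pi iku}\,du=\hat\phi(k)$ all go through, with the needed absolute integrability supplied by $\|\phi\|_{L^1}<\infty$ and the boundedness of $g$. The only implicit assumption you add is that $g$ is essentially bounded on a period, which is harmless here since the relevant $g_{\lambda}$ is a block function (and the same proof works for $g\in L^1$ of a period with minor adjustments to the majorants).
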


This lemma shows that the results in proposition \ref{the:pockets} do not depend on the choice of the Schwartz function $\phi$ since the zeroes of $\hat{\phi}(k) \cdot g_k(\lambda)$, as a function of $\lambda$, are exactly those of $g_k(\lambda)$.

By an elementary calculation we immediately find the Fourier coefficients of $g_{\lambda}$ and a smooth approximation of $g_{\lambda}$.
\begin{lemma}\label{lem:fcg}
The Fourier coefficients $g_k$ of the function $g_{\lambda}$ as defined in equation \eqref{eq:block}, are $g_0 = \lambda$ and $g_k = \frac{i}{2\pi k} \big(\e^{-2\pi ik\lambda} - 1\big)$ if $k \neq 0$. The Fourier transform of $\phi_{\alpha}$ is $\hat{\phi}_{\alpha}(y) = \exp(-\frac{\pi^2 y^2}{\alpha})$. 
\end{lemma}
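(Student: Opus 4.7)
The proof is a direct computation from the definitions. First I would verify the Fourier coefficients of the block function $g_\lambda$. Since $g_\lambda$ is identically $1$ on $[0,\lambda)$ and vanishes on $[\lambda,1)$, the defining integral reduces immediately:
\begin{equation*}
g_k = \int_0^1 g_\lambda(t)\,\e^{-2\pi ikt}\,dt = \int_0^{\lambda} \e^{-2\pi ikt}\,dt.
\end{equation*}
The case $k=0$ gives $g_0 = \lambda$ at once. For $k\neq 0$, antidifferentiation of the exponential yields
\begin{equation*}
g_k = \left[\frac{\e^{-2\pi ikt}}{-2\pi ik}\right]_0^{\lambda} = \frac{1 - \e^{-2\pi ik\lambda}}{2\pi ik} = \frac{i}{2\pi k}\bigl(\e^{-2\pi ik\lambda} - 1\bigr),
\end{equation*}
which is exactly the stated formula. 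Note that this also matches $|g_k| = \tfrac{1}{2\pi|k|}|\sin(\pi k\lambda)|$ used in Proposition~\ref{pro:fcf}.

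Next I would compute the Fourier transform of $\phi_\alpha$. The plan is to reduce to the classical Gaussian integral $\int_\R \e^{-x^2}\,dx = \sqrt{\pi}$ by completing the square in the exponent. Writing the integrand in the standard normalization and substituting to absorb $\alpha$, one arrives at the identity
\begin{equation*}
\int_\R \e^{-ax^2}\,\e^{-2\pi ixy}\,dx = \sqrt{\tfrac{\pi}{a}}\,\exp\!\bigl(-\tfrac{\pi^2 y^2}{a}\bigr),
\end{equation*}
valid for $a > 0$. Applied with the appropriate $a$ determined by the scaling $\phi_\alpha(x) = \alpha\phi(\alpha x)$ (and using that the prefactors are arranged so that $\int \phi_\alpha = 1$), this gives $\hat{\phi}_\alpha(y) = \exp(-\pi^2 y^2/\alpha)$, as claimed.

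Both computations are routine; the only place where care is required is in tracking the normalization constants in the Gaussian case so that the prefactor cancels cleanly and leaves a pure exponential, and in verifying that the chosen convention for the Fourier transform matches the one used in Proposition~\ref{pro:fcf} (so that the product $\hat{\phi}(k)\cdot g_k$ gives the Fourier series coefficients of $\phi*g_\lambda$). No genuine obstacle arises.
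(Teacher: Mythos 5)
Your computation of the block-function coefficients is correct and is exactly the ``elementary calculation'' the paper invokes without writing out (the paper gives no proof of this lemma beyond asserting it is straightforward). One caveat on the Gaussian half: with the scaling you cite, $\phi_{\alpha}(x)=\alpha\phi(\alpha x)$ and $\phi(x)=\pi^{-1/2}\e^{-x^2}$, your identity $\int_{\R}\e^{-ax^2}\e^{-2\pi ixy}\,dx=\sqrt{\pi/a}\,\exp(-\pi^2y^2/a)$ is applied with $a=\alpha^2$ and yields $\exp(-\pi^2y^2/\alpha^2)$, not $\exp(-\pi^2y^2/\alpha)$; the stated exponent only comes out if one instead takes $\phi_{\alpha}(x)=\sqrt{\alpha/\pi}\,\exp(-\alpha x^2)$, the normalization used earlier in section \ref{sec:season}. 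The paper itself is inconsistent between these two conventions, so this is a discrepancy to flag rather than a flaw in your method, but since you singled out the normalization as the one delicate point you should pin down which scaling is meant before asserting the final formula.
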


Our main interest is in the $\lambda$ dependence of the Fourier coefficients. So far we found $\hat{\phi}(k) \cdot g_k = \exp(-\frac{\pi^2 k^2}{\alpha}) \frac{i}{2\pi k} \big(\exp(-2\pi ik\lambda) - 1\big)$ if $k \neq 0$, from which we infer that
\begin{equation*}
 |\hat{\phi}(k) \cdot g_k| = \e^{-\frac{\pi^2 k^2}{\alpha}} \frac{1}{2\pi k} \sqrt{2(1-\cos(2\pi k \lambda))} = \e^{-\frac{\pi^2 k^2}{\alpha}} \frac{1}{\pi k} |\sin(\pi k \lambda)|.
\end{equation*}



\end{document}